\theoremstyle{plain}
\newtheorem{theorem}{Theorem}[section]
\newtheorem{lemma}[theorem]{Lemma}
\newtheorem{corollary}[theorem]{Corollary}
\theoremstyle{definition}
\theoremstyle{remark}
\newtheorem{remark}[theorem]{Remark}
\newcommand{\R}{\mathbb{R}}
\title[Short Title]{Invariant measures for piecewise continuous maps}
\subjclass[2010]{Primary 28D05, 47A35 Secondary 54H20}
\keywords{Invariant measure, topological dynamics, interval exchange transformation}
\begin{document}
 \maketitle

\centerline{\scshape Benito Pires \footnote{Partially supported by  S\~ao Paulo Research Foundation (FAPESP) grant \# 2015/20731-5.}}
\smallskip

{\footnotesize
 \centerline{Departamento de Computa\c c\~ao e Matem\'atica, Faculdade de Filosofia, Ci\^encias e Letras}
 \centerline {Universidade de S\~ao Paulo, 14040-901, Ribeir\~ao Preto - SP, Brazil}
   \centerline{benito@usp.br} }

\marginsize{2.5cm}{2.5cm}{1cm}{3cm}

  \begin{abstract}  We say that $f:[0,1]\to [0,1]$ is a {\it piecewise continuous interval map} if there exists a partition
$0=x_0<x_1<\cdots<x_{d}<x_{d+1}=1$ of $[0,1]$ such that $f\vert_{(x_{i-1},x_i)}$ is continuous and the lateral limits $w_0^+=\lim_{x\to 0^+} f(x)$, $w_{d+1}^-=\lim_{x\to 1^-} f(x)$, \mbox{$w_i^{-}=\lim_{x\to x_i^-} f(x)$} and
$w_i^{+}=\lim_{x\to x_i^+} f(x)$ exist for each $i$. We prove that every piecewise continuous interval map  without connections admits an invariant Borel probability measure. We also prove that every injective piecewise continuous interval map with no connections and no periodic orbits is topologically semi-conjugate to an interval exchange transformation.
 \end{abstract}
  
 \section{Introduction}
 
 Much information about the long-term behaviour of the iterates of an interval map is revealed by its invariant measures. Regarding piecewise continuous maps, the presence of a non-atomic invariant Borel probability measure can be used to construct topological conjugacies or semi-conjugacies with interval exchange transformations (IETs).
 
 Transfer operators have proved to be an important tool to obtain absolutely continuous invariant measures for piecewise smooth piecewise monotone interval maps (see \cite{VB,BG,LY}). The Folklore Theorem (see \cite{AF,RB}) claims that every piecewise expanding Markov map of the interval admits an ergodic invariant measure equivalent to the Lebesgue measure. In general, these types of results require that each branch of the piecewise continuous map be $C^r$-smooth ($r\ge 1$), monotone and have derivative greater than $1$.  
 
 The aim of this article is to prove the existence of invariant Borel probability measures for piecewise continuous interval maps not embraced by the transfer operator approach. In this way, our result includes 
 gap maps, piecewise contractions and generalised interval exchange maps (GIETs). No monotonicity and no smoothness assumptions, beyond the uniform continuity of each branch of the map, are assumed. Our result is the natural version of the Kryloff-Bogoliouboff Theorem (see \cite{KB}) for piecewise continuous interval maps.        

 We are also interested in constructing topological semi-conjugacy between injective piecewise continuous interval maps and interval exchange transformations, possibly with flips. In this regard, it is worth mentioning the result by J. Milnor and W. Thurston (see \cite{MT}), which states that any continuous piecewise monotone interval map of positive entropy ${\rm htop}\,(T)$ is topologically semi-conjugate to a map with piecewise constant slope equals to $\pm e^{{\rm htop}\,(T)}$. This result was generalised by L. Alsed\`a and M. Misiurewicz in \cite{AM} to piecewise continuous piecewise monotone interval maps of positive entropy. The same type of result was obtained by M. Misiurewicz and S. Roth in \cite{MR} for
  countably piecewise continuous, piecewise monotone intervals maps. The  author and A. Nogueira proved in \cite{NP} that every injective piecewise contraction is topologically conjugate to a constant slope map with slope equals to $\pm \frac12$.

The proof of the Kryloff-Bogoliouboff Theorem fails for discontinuous maps. The main difficulty, as pointed out by C. Liverani in \cite[p. 4]{CL}, is Lemma \ref{mi}, which is automatic for continuous functions. In this article, we present a  proof that overcomes such limitation.
 
 \section{Statement of the results}

 Throughout this article, assume that $f:[0,1]\to [0,1]$ is a piecewise continuous interval map. Hence,
 there exists a partition
$0=x_0<x_1<\cdots<x_{d}<x_{d+1}=1$ of $[0,1]$ such that $f\vert_{(x_{i-1},x_i)}$ is continuous and the lateral limits $w_0^+=\lim_{x\to 0^+} f(x)$, $w_{d+1}^-=\lim_{x\to 1^-} f(x)$, $w_i^{-}=\lim_{x\to x_i^-} f(x)$ and
$w_i^{+}=\lim_{x\to x_i^+} f(x)$ exist for each $i$. Let
$$D=\{x_0, \ldots, x_{d+1}\}, \quad W=\{w_0^+,w_{1}^-,w_1^+,\ldots,w_d^-,w_d^+,w_{d+1}^-\}.
$$ 
We say that $f$ has {\it no connections} if 
\begin{equation}\label{condition}
\bigcup_{i=0}^{d+1}\bigcup_{k=1}^\infty \{f^k(x_i)\}\cap D=\emptyset\quad\textrm{and}\quad
\bigcup_{w\in W}\bigcup_{k=0}^\infty \{f^k(w)\}\cap D=\emptyset.
\end{equation}
Notice that the first condition in $(\ref{condition})$ is implied by the second one when $f$ is left-continuous or right-continuous at each point $x_i\in D$.  We say that $x\in [0,1]$ is a {\it periodic point} of $f$ if there exists an integer $k\ge 1$ such that $f^k(x)=x$.

Our first result turns out to be a version of the Kryloff-Bogoliouboff Theorem \cite{KB} for piecewise continuous  maps. 

  \begin{theorem}\label{main1} Let $f:[0,1]\to [0,1]$ be a piecewise continuous  map with no connections, then $f$ admits an invariant Borel probability measure $\mu$. Moreover, if $f$ has no periodic points, then the measure $\mu$ is non-atomic.  
\end{theorem}

In the world of generalised interval exchange transformations, the hypothesis of no connections corresponds to the notion of having an $\infty$-complete path. As remarked in \cite[p. 1586]{MMY},
every GIET with such property is topologically semi-conjugate to an IET. The next result extends this claim to piecewise continuous maps. It can also be considered a generalisation of the item (a) of the Structure Theorem by Gutierrez \cite[p. 18]{CG}.
 
  \begin{corollary}\label{cor1} Let $f:[0,1]\to [0,1]$ be an injective piecewise continuous  map with no connections and no periodic points, then $f$ is topologically semi-conjugate to an interval exchange transformation, possibly with flips.
   \end{corollary}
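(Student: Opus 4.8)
The plan is to manufacture the semi-conjugacy out of the invariant measure supplied by Theorem~\ref{main1}. Let $\mu$ be a non-atomic $f$-invariant Borel probability measure (it exists and is non-atomic by Theorem~\ref{main1}, since $f$ has no periodic points), and define $h\colon[0,1]\to[0,1]$ by $h(x)=\mu([0,x])$. Non-atomicity makes $h$ continuous, and $h$ is non-decreasing with $h(0)=0$, $h(1)=1$, hence a continuous surjection; $h$ is constant on the closure of each complementary interval of $\operatorname{supp}\mu$ and injective everywhere else. Since $f$ is injective, each branch $f|_{(x_{i-1},x_i)}$ is a strictly monotone homeomorphism onto an open interval $J_i:=f((x_{i-1},x_i))$, and global injectivity forces the $J_i$ to be pairwise disjoint. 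For a subinterval $(a,b)\subseteq(x_{i-1},x_i)$ one has that $f((a,b))$ is an open interval with $f^{-1}(f((a,b)))=(a,b)$, so $f$-invariance gives $\mu(f((a,b)))=\mu((a,b))$; in particular $\mu(J_i)=\mu((x_{i-1},x_i))$ and, summing over $i$, $\sum_i\mu(J_i)=1$. Hence the disjoint $J_i$ carry full $\mu$-measure, and the finitely many complementary closed intervals of $\bigcup_iJ_i$ are $\mu$-null.

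Next I would construct the target map. Put $z_i=h(x_i)$ and discard the indices with $z_{i-1}=z_i$; then $z_i-z_{i-1}=\mu((x_{i-1},x_i))$ and $|h(\overline{J_i})|=\mu(J_i)=z_i-z_{i-1}$. Since $h$ is constant on each ($\mu$-null) complementary closed interval of $\bigcup_iJ_i$, the intervals $h(\overline{J_i})$ cover $[0,1]$ up to finitely many points; their lengths summing to $1$, they must be non-overlapping and hence tile $[0,1]$, and likewise the intervals $[z_{i-1},z_i]$ tile $[0,1]$. Let $g\colon[0,1]\to[0,1]$ act on the $i$-th source tile $[z_{i-1},z_i]$ as the affine isometry onto $h(\overline{J_i})$ that is orientation-preserving or orientation-reversing according as $f|_{(x_{i-1},x_i)}$ is increasing or decreasing; this is an interval exchange transformation, possibly with flips. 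The identity $\mu((a,b))=\mu(f((a,b)))$ inside a branch then gives $h\circ f=g\circ h$ on the interior of every source tile.

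It remains to check that $g$ is well defined and that $h\circ f=g\circ h$ holds at every point, i.e. that $h\circ f$ is constant on each complementary interval $[c,d]$ of $\operatorname{supp}\mu$ (on which $h$ itself is constant). On such a $[c,d]$, $f$ restricts to finitely many monotone homeomorphisms onto intervals of zero $\mu$-measure, so $h\circ f$ is locally constant away from $D$; thus it can fail to be constant on $[c,d]$ only via a jump $|h(w_i^{+})-h(w_i^{-})|$ at a break point $x_i\in[c,d]$ (together with the analogous comparison involving $h(f(x_i))$ when $f$ is discontinuous at $x_i$, and a boundary term at $c$ or $d$ if these lie in $D$). The whole proof therefore reduces to the assertion: \emph{if $x_i$ belongs to a complementary interval of $\operatorname{supp}\mu$, then $\mu$ gives zero mass to the interval between $w_i^{-}$ and $w_i^{+}$}. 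Since $[c,d]$ is $\mu$-null, $f((c,x_i))$ and $f((x_i,d))$ are $\mu$-null intervals ending at $w_i^{-}$ and $w_i^{+}$, hence each lies in a single complementary interval of $\operatorname{supp}\mu$, and the assertion is exactly that these two complementary intervals coincide. I expect this to be the main obstacle, and it is precisely where the no-connections hypothesis $(\ref{condition})$ is indispensable — if $f$ were continuous there would be no such interior break point and nothing to prove, which is the difficulty pointed out in the introduction. I would establish it by contradiction: if the two complementary intervals were distinct and carried positive $\mu$-mass between them, then iterating $f$ and exploiting $f$-invariance of $\mu$ together with the tiling above should force a forward image of some $x_j$ or of some $w_j^{\pm}$ to land in $D$, violating $(\ref{condition})$. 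Granting the assertion, $g$ is well defined, $h\circ f=g\circ h$ on all of $[0,1]$ once $g$ is assigned its forced values at the break points, and $h$ is a continuous surjection intertwining $f$ with the interval exchange $g$ — the required topological semi-conjugacy.
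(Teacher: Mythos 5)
Your approach coincides with the paper's: take the non-atomic $f$-invariant measure $\mu$ from Theorem~\ref{main1}, set $h(x)=\mu([0,x])$, and define the interval exchange on the image of $h$ by the isometry computation $|h(f(y))-h(f(x))|=\mu([f(x),f(y)])=\mu(f^{-1}([f(x),f(y)]))=\mu([x,y])=|h(y)-h(x)|$, valid for $x,y$ in the same continuity interval by injectivity of $f$. You have, moreover, put your finger on the one place where this argument is delicate: the paper proves the invariance identity $(\ref{conj})$ only for $x,y\in(x_{i-1},x_i)$, then declares ``By the claim, $T$ is well-defined.'' That assertion is automatic for $t=h(x)$ in the open interval $(t_{i-1},t_i)$, since monotonicity of $h$ forces the whole fiber $h^{-1}(t)$ into a single $(x_{i-1},x_i)$. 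But it is not automatic at $t=t_i$ when $x_i$ lies in the interior of a complementary interval of $\operatorname{supp}\mu$, and your reduction of this case to ``$\mu$ gives zero mass to the interval between $w_i^-$ and $w_i^+$'' is exactly right.

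The problem is that you stop there. You write ``I would establish it by contradiction'' by iterating $f$ and invoking the no-connections hypothesis, but you never produce the argument, and you yourself label it ``the main obstacle.'' As a proof, this leaves the decisive step open: everything in your write-up before that point is routine bookkeeping (the disjointness and full measure of the branch images $J_i$, the tiling of $[0,1]$ by the $[z_{i-1},z_i]$ and by the $h(\overline{J_i})$), while the only nontrivial content — ruling out positive $\mu$-mass between $w_i^-$ and $w_i^+$ above a $\mu$-null breakpoint — is exactly what is missing. Unless you can actually derive the contradiction (for instance by showing that positive mass there would force some forward iterate of a $w_j^\pm$ or an $x_j$ into $D$, or by showing that every $x_i$ must lie in $\operatorname{supp}\mu$ under the no-connections hypothesis), the proof is incomplete. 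It is worth noting that the paper itself does not spell this step out; but that does not discharge your obligation to prove the assertion you explicitly rely on.
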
 
   
Now we present a class of piecewise continuous interval maps for which having no connections is a generic (in the measure-theoretical sense) property. We recall that an irrationality criterium for the absence of connections in IETs without flips was provided by M. Keane in \cite{MK}.

 \begin{theorem}\label{main2} Let $\phi_1,\ldots,\phi_{d+1}:[0,1]\to (0,1)$ be continuous maps and let $\Omega\subset\R^d$ be the open set $\Omega=\{(x_1,\ldots,x_{d})\in \R^d \mid 0 < x_1<\cdots < x_{d}<1\}$, then for Lebesgue almost every $(x_1,\ldots,x_{d})\in \Omega$, the piecewise continuous map $f:[0,1]\to (0,1)$ defined by $f(x)=\phi_i(x)$ if $x\in I_i$, where $I_1=[0,x_1), I_2=[x_{1},x_2),\ldots, I_d=[x_{d-1},x_d)$, $I_{d+1}=[x_{d},1]$, has no connections and hence admits an invariant Borel probability measure.
  \end{theorem}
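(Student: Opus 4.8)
The plan is the following. For a parameter $\mathbf x=(x_1,\dots,x_d)\in\Omega$ write $f_{\mathbf x}$ for the associated piecewise continuous map, and for $j\in\{1,\dots,d\}$, $k\ge 0$ and a \emph{boundary value} $w$ — one of $w_0^+=\phi_1(0)$, $w_{d+1}^-=\phi_{d+1}(1)$, $w_i^-=\phi_i(x_i)$ or $w_i^+=\phi_{i+1}(x_i)$ with $1\le i\le d$ — set $B_{w,k,j}=\{\mathbf x\in\Omega:\,f_{\mathbf x}^{k}(w)=x_j\}$. First I would make two reductions. Since $f_{\mathbf x}([0,1])\subseteq(0,1)$ and every element of $W$ lies in $(0,1)$, no forward iterate of any $w\in W$ can equal $x_0=0$ or $x_{d+1}=1$; and since $f_{\mathbf x}(x_i)\in W$ for every $i\in\{0,\dots,d+1\}$ (indeed $f_{\mathbf x}(x_i)=w_i^+$ if $i\le d$, because $x_i\in I_{i+1}$, and $f_{\mathbf x}(x_{d+1})=w_{d+1}^-$), the positive orbit of any $x_i$ is the orbit of an element of $W$, so the first condition in $(\ref{condition})$ is implied by the second. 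Hence $\{\mathbf x\in\Omega:\,f_{\mathbf x}\text{ has a connection}\}=\bigcup_{w,k,j}B_{w,k,j}$ is a countable union, and it suffices to show that each $B_{w,k,j}$ is Lebesgue-null; measurability is routine, as $(\mathbf x,y)\mapsto f_{\mathbf x}^{k}(y)$ is Borel and $\mathbf x\mapsto w$ is continuous.

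The main device is an \emph{itinerary decomposition}. Given $\mathbf x$, a point $y\in[0,1]$ and a word $\omega=(\omega_1,\dots,\omega_k)\in\{1,\dots,d+1\}^k$ ($k=0$ allowed, the empty word), say $y$ \emph{realises} $\omega$ if $f_{\mathbf x}^{t-1}(y)\in\mathrm{int}\,I_{\omega_t}$ for $1\le t\le k$. On the set of $\mathbf x$ for which $w$ realises $\omega$ one has $f_{\mathbf x}^{k}(w)=\Phi_\omega(w)$, where $\Phi_\omega:=\phi_{\omega_k}\circ\cdots\circ\phi_{\omega_1}$ — the identity if $k=0$ — is a \emph{fixed} continuous self-map of $[0,1]$, independent of $\mathbf x$. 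If $w$ realises no word of length $k$, then $f_{\mathbf x}^{t}(w)\in\{x_1,\dots,x_d\}$ for some $t<k$, i.e.\ $\mathbf x\in B_{w,t,\ell}$ for some $t<k$ and some $\ell$. Inducting on $k$, it is therefore enough to show that for every word $\omega$ the set $\{\mathbf x\in\Omega:\,\Phi_\omega(w)=x_j\}$ is null. The useful observation is that $\Phi_\omega(w)$ depends on at most one coordinate of $\mathbf x$: it is constant if $w\in\{w_0^+,w_{d+1}^-\}$, and it equals $\psi(x_i)$, where $\psi=\Phi_\omega\circ\phi_i$ (resp.\ $\psi=\Phi_\omega\circ\phi_{i+1}$) is again a fixed continuous map, if $w=w_i^-$ (resp.\ $w=w_i^+$).

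When $w$ depends on no coordinate, or only on a coordinate $x_i$ with $i\ne j$, Fubini's theorem finishes the estimate: integrating $x_j$ out first, for every choice of the remaining coordinates the fibre $\{x_j:\,\psi(x_i)=x_j\}$ is the single point $x_j=\psi(x_i)$ and hence is null, so $\{\mathbf x\in\Omega:\,\psi(x_i)=x_j\}$ is null. This settles every ``transverse'' return.

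The remaining case $i=j$ — a lateral limit $w_i^{\pm}$ returning, after $k$ iterates, to the partition point $x_i$ it issues from — is the one I expect to be \textbf{the main obstacle}. Here $\{\mathbf x\in\Omega:\,\Phi_\omega(w)=x_i\}=\Omega\cap\{\mathbf x:\,x_i\in\mathrm{Fix}(\psi)\}$, with $\psi=\Phi_\omega\circ\phi_i$ or $\psi=\Phi_\omega\circ\phi_{i+1}$, so the whole matter reduces to showing that the fixed-point sets $\mathrm{Fix}(\psi)=\{t\in[0,1]:\,\psi(t)=t\}$ of these (countably many) finite compositions of branches are Lebesgue-null. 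In contrast with the transverse case, this is a statement about the size of a fixed-point set rather than a codimension-one condition, and it is here that one must bring in more than the plain continuity of the $\phi_i$ — for piecewise contractions, for instance, each such $\psi$ is a contraction, so $\mathrm{Fix}(\psi)$ reduces to a single point. Once this is established, the induction on $k$ closes and the countable union over the triples $(w,k,j)$ yields ``$f_{\mathbf x}$ has no connections for Lebesgue-a.e.\ $\mathbf x$''; the invariant measure then comes from Theorem~\ref{main1}.
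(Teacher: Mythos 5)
Your analysis follows essentially the same route as the paper --- enumerating finite compositions of the branches $\phi_i$ (your $\Phi_\omega$, the paper's classes $\mathscr{C}_k$), reducing the orbits of $D$ to orbits of $W$ by noting $f(x_i)\in W$, and then trying to show that each exceptional set of parameters is Lebesgue-null --- and the obstacle you flag at the ``reflexive'' case $i=j$ is a genuine gap in the paper's own proof. The paper disposes of each set $\{\mathbf{x}\in\Omega : x_j = h(w_i)\}$, $h\in\bigcup_k\mathscr{C}_k$, by asserting that it is ``the graph of a continuous function defined on $[0,1]$, thus a Lebesgue null set.'' That is correct when $i\ne j$, or when $w_i$ is one of the two constants $w_0^+=\phi_1(0)$ or $w_{d+1}^-=\phi_{d+1}(1)$: the set has the form $\{x_j=g(x_i)\}$ with $g$ a fixed continuous function, and is indeed a null graph --- exactly your Fubini step. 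But when $i=j$ we have $w_j^\pm=\phi_j(x_j)$ or $\phi_{j+1}(x_j)$, so the set is $\{\mathbf{x}\in\Omega : x_j\in\mathrm{Fix}(\psi)\}$ with $\psi=h\circ\phi_j$ or $h\circ\phi_{j+1}$ a finite composition of branches. This is not a graph; it is null if and only if $\mathrm{Fix}(\psi)\subset[0,1]$ is null, and that does not follow from continuity of the $\phi_i$ alone. The paper's proof silently skips this case.

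In fact the theorem as stated appears to be false. Take $d=1$ and choose $\phi_1:[0,1]\to(0,1)$ continuous with $\phi_1(t)=t$ on $[1/4,3/4]$ (say $\phi_1\equiv 1/4$ on $[0,1/4]$ and $\phi_1\equiv 3/4$ on $[3/4,1]$), and any $\phi_2$. For every $x_1\in[1/4,3/4]$, a positive-measure subset of $\Omega=(0,1)$, one has $w_1^-=\phi_1(x_1)=x_1\in W\cap D$, i.e.\ a connection already at $k=0$. So your conclusion --- that one must assume something beyond continuity that forces every finite composition of branches to have a Lebesgue-null fixed-point set (e.g.\ all $\phi_i$ contractions, or real-analytic and not the identity on any subinterval) --- is exactly right, and the paper's ``graph'' argument does not supply it.
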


\section{Proof of Theorem \ref{main1}}   

Henceforth, assume that the map $f$ has no connections and no periodic orbits.

\begin{lemma}\label{lemjx} Given  $x\in [0,1]$ and an integer $r\ge 1$, there exists an open subinterval $J_x$ of $[0,1]$ containing $x$ such that
\begin{equation}\label{Jx}
 \{f(y),\ldots, f^{r}(y)\}\cap J_x=\emptyset\quad\textrm{for every}\quad y\in J_x.
\end{equation}
\end{lemma}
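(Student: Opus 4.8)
The plan is to take $J_x$ of the form $(x-\delta,x+\delta)\cap[0,1]$ with $\delta>0$ small, and to control the first $r$ iterates on it by combining a routine continuity argument with the no-connections hypothesis. The only way the naive argument can fail is that $f^j$ need not be continuous at $x$; but $f^j$ can be discontinuous near $x$ only if $f^i(x)\in D$ for some $0\le i<j$, and \eqref{condition} will force this to happen at most once, at a single point $x_m\in D$ whose lateral jump is controlled by the well-behaved values $w_m^{\pm}\in W$.

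If $f^i(x)\notin D$ for all $0\le i\le r-1$, then, since $f$ is continuous on an open interval around each of $x,f(x),\dots,f^{r-1}(x)$, the compositions $f,f^2,\dots,f^r$ are continuous on a common neighbourhood of $x$, and $f^j(x)\ne x$ for $1\le j\le r$ because $f$ has no periodic points. Putting $\varepsilon=\tfrac13\min_{1\le j\le r}|f^j(x)-x|>0$ and choosing $\delta<\varepsilon$ so small that $|f^j(y)-f^j(x)|<\varepsilon$ on $(x-\delta,x+\delta)$ for all $j\le r$, the interval $J_x=(x-\delta,x+\delta)\cap[0,1]$ works, since $f^j(J_x)$ lies in the $\varepsilon$-ball about $f^j(x)$, which is disjoint from the $\varepsilon$-ball about $x$ that contains $J_x$. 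Otherwise let $i_0\in\{0,\dots,r-1\}$ be least with $f^{i_0}(x)=x_m\in D$ (so $i_0=0$ exactly when $x\in D$). Since $f$ is continuous near each of $x,f(x),\dots,f^{i_0-1}(x)$, which lie off $D$ by minimality of $i_0$, the map $f^{i_0}$ is continuous near $x$; and by the first part of \eqref{condition}, $f^k(x_m)\notin D$ for all $k\ge1$, so $x_m$ is the unique point of $x,f(x),\dots,f^r(x)$ lying in $D$. I would then establish, by induction on $k\in\{1,\dots,r\}$, that for some $\eta>0$ the restriction $f^k|_{(x_m,x_m+\eta)}$ extends continuously to $[x_m,x_m+\eta)$ with value $f^{k-1}(w_m^{+})$ at $x_m$, and symmetrically on $(x_m-\eta,x_m]$ with $w_m^{-}$: the base case is the definition of $w_m^{\pm}$, and the inductive step uses $f^{k-1}(w_m^{\pm})\notin D$ (the second part of \eqref{condition}), so that $f$ is continuous on a neighbourhood of that value.

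To conclude, note that $x$ is equal to none of the finitely many points $f^j(x)$ ($1\le j\le i_0$), $f^k(x_m)$ ($1\le k\le r-i_0$) and $f^{k-1}(w_m^{\pm})$ ($1\le k\le r-i_0$): the first is excluded by non-periodicity, and if one of the others were equal to $x$ then applying $f^{i_0}$ would send an orbit of $x_m$ or of some $w_m^{\pm}$ back into $D$, against \eqref{condition}. Let $\varepsilon>0$ be smaller than $\tfrac13$ the distance from $x$ to this finite set. Shrink $\eta$, keeping $\eta<\varepsilon$, so that $|f^k(t)-f^{k-1}(w_m^{+})|<\varepsilon$ for $t\in(x_m,x_m+\eta)$ and $|f^k(t)-f^{k-1}(w_m^{-})|<\varepsilon$ for $t\in(x_m-\eta,x_m)$ (possible by continuity of the extensions above), and then choose $\delta<\varepsilon$ so small that $f^{i_0}((x-\delta,x+\delta))\subseteq(x_m-\eta,x_m+\eta)$ and $|f^j(y)-f^j(x)|<\varepsilon$ on $(x-\delta,x+\delta)$ for all $j\le i_0$. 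Then for $y\in J_x=(x-\delta,x+\delta)\cap[0,1]$ and $1\le j\le r$: if $j\le i_0$ then $f^j(y)$ is within $\varepsilon$ of $f^j(x)$; and if $j=i_0+k$ with $k\ge1$, then either $f^{i_0}(y)=x_m$ and $f^j(y)=f^k(x_m)$, or $f^{i_0}(y)\in(x_m-\eta,x_m)\cup(x_m,x_m+\eta)$ and $f^j(y)$ is within $\varepsilon$ of $f^{k-1}(w_m^{-})$ or of $f^{k-1}(w_m^{+})$. In every case $f^j(y)$ lies within $\varepsilon$ of a point at distance more than $3\varepsilon$ from $x$, hence outside $(x-\varepsilon,x+\varepsilon)\supseteq J_x$, which is exactly \eqref{Jx}.

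The step I expect to be the main obstacle is the inductive claim about the extensions of $f^k$ near $x_m$, together with the verification that $x$ avoids the finite set of limiting values; this is precisely where \eqref{condition} does the essential work, turning a possible discontinuity of an iterate at $x$ into one harmless passage through $D$. The remaining estimates are routine, with the obvious adjustments when $x_m$, or $x$ itself, is an endpoint of $[0,1]$ and only one lateral limit is relevant.
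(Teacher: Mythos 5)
Your proof is correct, and the core idea is the same as the paper's: use the no-connections hypothesis to show that the orbit of $x$ (supplemented by the lateral-limit orbits $f^{k-1}(w_m^\pm)$ at the unique passage through $D$) avoids $D$ for the first $r$ steps, so iterates of nearby points shadow this finite "limiting orbit," and use no-connections plus no-periodic-points to show $x$ itself stays a positive distance from that finite set. The packaging differs slightly: the paper forms a single set $\gamma=\bigcup_{k=1}^r\{f^{k-1}(w_i^-),f^k(x_i),f^{k-1}(w_i^+)\}$ and runs one uniform $\epsilon$--$\delta$ estimate $d(f^k(y),\gamma)<\epsilon$, whereas you split into left/right lateral cases and do an explicit induction on the continuous one-sided extensions of $f^k$ at $x_m$; and the paper handles the case $f^{i_0}(x)=x_m\in D$ with $i_0\ge 1$ by first producing $J_{x_m}$ and then pulling it back by a continuous branch of $f^{i_0}$, whereas you inline that reduction into the main argument. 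These are presentational rather than substantive differences.
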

\begin{proof} First let us prove the result for $x=x_i$, where $1\le i\le d$. Let 
$$\gamma=\bigcup_{k=1}^r \{f^{k-1}(w_i^-) ,f^k(x_i), f^{k-1}(w_i^+)\}.$$
The hypothesis of no connections implies that 
$ \gamma \cap \{ x_0,\ldots,x_{d+1}\}=\emptyset.
$
Hence, $f$ is continuous on an open neighborhood of $\gamma$. Moreover, for every $\epsilon>0$, there exist $0<\delta<\epsilon$ and an interval $J_{x_i}=(x_i-\delta,x_i+\delta)\subset [0,1]$ such that 
\begin{equation}\label{e}
 d\left( f^k(y), \gamma\right)<\epsilon\quad\textrm{for every}\quad y\in J_{x_i}\quad\textrm{and}\quad  1\le k\le r,
 \end{equation}
where $d\left( f^k(y), \gamma\right)=\min_{z\in \gamma} \vert f^k(y)-z \vert$.

Let $\epsilon=\frac12 d(x_i,\gamma)$, then $\epsilon>0$ because $x_i\not\in \gamma$. This together with $(\ref{e})$ implies that $\vert f^k(y)-x_i\vert >\epsilon >\delta$ for all $y\in J_{x_i}$ and $1\le k\le r$. Hence, (\ref{Jx}) holds for every $x=x_i\in D$.

The cases in which $x=x_0=0$ or $x=x_{d+1}=1$ follows likewise, by considering intervals of the form $J_{x_0}=[0,\delta)$ and $J_{x_{d+1}}=(1-\delta,1]$, respectively.

It remains to consider the case in which $x\not\in \{x_0,\ldots,x_{d+1}\}$. Due to the hypothesis of no connections,  there are only two possibilities: either
$ \{ f^k(x): k\ge 0\}\cap \{x_0,\ldots,x_{d+1}\}=\emptyset$ or there exist unique $k\ge 1$ and $0\le i\le d+1$
 such that $f^k(x)=x_i$. As for the first possibility, take $\gamma=\{f(x),\ldots,f^r(x)\}$, then $f$ is continuous on $\{x\}\cup \gamma$. Moreover, since $f$ has no periodic points, we have that $x\not\in \gamma$. Therefore, for every $\epsilon>0$, there exist $0<\delta<\epsilon$ and an interval $J_x=(x-\delta,x+\delta)$ such that
 $(\ref{e})$ holds for $J_x$ in the place of $J_{x_i}$. To conclude the proof, proceed as before. Concerning the second possibility, let $J_{x_i}=(x_i-\delta,x_i+\delta)$ be as in the beginning of the proof, then, as already proved,
\begin{equation}\label{Jx1}
 \{f(y),\ldots, f^{r}(y)\}\cap J_{x_i}=\emptyset\quad\textrm{for every}\quad y\in J_{x_i}.
\end{equation} 
 Moreover, $f$ is locally continuous around $\{x,f(x),\ldots,f^{k-1}(x)\}$, thus there exists an interval $J_x=(x-\eta,x+\eta)$ such that $J_x, f(J_x),\ldots, f^{k}(J_x)$ are pairwise disjoint intervals and $f^k(J_x)\subset J_{x_i}$. Now $(\ref{Jx1})$ implies that $(\ref{Jx})$ holds for every $y\in J_x$, which concludes the proof.
 \end{proof}

Let $q\in [0,1]$ be given. Since $f$ has no connections and no periodic orbits, there exists $\ell\ge 0$ such that 
$\left\{f^k(q): k\ge \ell \right\}\cap D=\emptyset$. Hereafter, set $p=f^\ell(q)$, then
\begin{equation}\label{p}
\{ p, f(p), f^2(p),\ldots \} \cap D=\emptyset.
\end{equation}
Denote by $(\mu_n)_{n=1}^\infty$ the sequence of Borel probability measures on $[0,1]$ defined by
 $$ \mu_n = \dfrac{1}{n} \sum_{k=0}^{n-1} \delta_{f^k(p)},
 $$
where $\delta_{f^k(p)}$ is the Dirac probability measure on $[0,1]$ concentrated at $f^k(p)$.

By the Banach-Alaoglu Theorem, the space of Borel probability measures on a compact metric space is compact with respect to the weak$\textsuperscript{$*$}$ topology. Hence, there exist a Borel probability measure on $[0,1]$, denoted henceforth by $\mu$, and a 
subsequence  of $\{\mu_n\}$, denoted henceforth by $\{\mu_{n_j}\}_{j=1}^\infty$,
 that converges to $\mu$ in the weak$\textsuperscript{$*$}$ topology.
 
 The next result is going to be used twice, in Lemma \ref{na} as well as in Lemma \ref{mi}.
 
\begin{lemma}\label{lem4} Let $x\in [0,1]$. Given $\epsilon>0$, there exist an open subinterval $J_x$ of $[0,1]$ containing $x$, and an integer $j_0\ge 1$ such that
\begin{equation}\label{xxx}
\mu_{n_j}(J_x)  <\epsilon \quad \textrm{for every}\quad j\ge j_0.
\end{equation}
\end{lemma}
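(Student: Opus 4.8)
The plan is to combine Lemma \ref{lemjx} with the weak$^*$ convergence $\mu_{n_j}\to\mu$ to control how much mass the averaged measures put on a small interval around $x$. Fix $\epsilon>0$ and choose an integer $r\ge 1$ with $\frac1r<\frac{\epsilon}{2}$. By Lemma \ref{lemjx} applied to this $x$ and $r$, there is an open subinterval $J_x\ni x$ of $[0,1]$ such that $\{f(y),\ldots,f^r(y)\}\cap J_x=\emptyset$ for every $y\in J_x$. The key consequence is a combinatorial one: for any point $p$ and any block of $r$ consecutive iterates $f^m(p),f^{m+1}(p),\ldots,f^{m+r-1}(p)$, at most one of them can lie in $J_x$. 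Indeed, if $f^a(p)\in J_x$ and $f^b(p)\in J_x$ with $m\le a<b\le m+r-1$, then writing $y=f^a(p)\in J_x$ and $k=b-a\in\{1,\ldots,r\}$ we get $f^k(y)=f^b(p)\in J_x$, contradicting the property of $J_x$.

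Next I would convert this into an upper bound on $\mu_n(J_x)$. Since among the $n$ points $p,f(p),\ldots,f^{n-1}(p)$ we can group them into roughly $n/r$ consecutive blocks of length $r$, each contributing at most one point to $J_x$, we obtain
\begin{equation}\label{bound}
\mu_n(J_x)=\frac1n\#\{0\le k\le n-1: f^k(p)\in J_x\}\le \frac1n\left(\frac{n}{r}+1\right)=\frac1r+\frac1n.
\end{equation}
Applying this along the subsequence $\{n_j\}$ and using $\frac1r<\frac{\epsilon}{2}$, we get $\mu_{n_j}(J_x)<\frac{\epsilon}{2}+\frac1{n_j}$. Choosing $j_0$ so large that $\frac1{n_{j_0}}<\frac{\epsilon}{2}$, and hence $\frac1{n_j}<\frac{\epsilon}{2}$ for all $j\ge j_0$, yields $\mu_{n_j}(J_x)<\epsilon$ for every $j\ge j_0$, which is exactly \eqref{xxx}.

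I expect the main subtlety to be a bookkeeping one rather than a conceptual obstacle: one must be slightly careful that $J_x$ is genuinely an open \emph{subinterval} of $[0,1]$ at the endpoints $x=0$ and $x=1$ (where it has the form $[0,\delta)$ or $(1-\delta,1]$), but this is already handled by Lemma \ref{lemjx}. Note also that the argument is uniform in the starting point and does \emph{not} use \eqref{p}; in particular there is no need to invoke the weak$^*$ limit $\mu$ at all for this lemma — the estimate holds directly at the level of the $\mu_n$'s, which is why it will later be usable both for showing $\mu$ is non-atomic (Lemma \ref{na}) and for the replacement of the problematic continuity step (Lemma \ref{mi}). The only place flexibility in $r$ matters is making $\frac1r$ small, so the whole proof is just: pick $r$ large, feed it to Lemma \ref{lemjx}, count iterates block by block.
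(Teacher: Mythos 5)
Your proof is correct and follows essentially the same route as the paper: choose $r$ large, invoke Lemma~\ref{lemjx} to get the return-time separation, count at most one visit per window of length $r$, and pick $j_0$ large enough along the subsequence. The paper compresses the block-counting into the single inequality $(\ell-1)r\le n_j$ and uses $2/r<\epsilon$ instead of your $1/r<\epsilon/2$, but these are cosmetic differences; as you rightly note at the end (despite your opening sentence), the weak$^*$ limit $\mu$ plays no role in this lemma.
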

\begin{proof} Let $r\ge 1$ be an integer so great that $\dfrac{2}{r}<\epsilon$. Since $\{n_j\}_{j=1}^\infty$ is a subsequence of $\{1,2,\ldots\}$, there exists $j_0\ge 1$ such that $n_{j}>r$ for every $j\ge j_0$. Let $J_x$ be as in the statement of Lemma \ref{lemjx}. Let $j\ge j_0$ and $\ell=\#\{0\le k\le n_j-1 \mid f^k(p)\in J_x\}$, where $\#$ denotes cardinality. By $(\ref{Jx})$, we have that $(\ell-1) r \le n_j$, thus 
$$ \mu_{n_j}(J_x)= \dfrac{1}{n_j} \sum_{k=0}^{n_j-1} \delta_{f^k(p)}(J_x)= \dfrac{\#\{0\le k\le n_j-1 \mid f^k(p)\in J_x\}}{n_j}\le \dfrac{2}{r}<\epsilon\,\,\textrm{for every}\,\, j\ge j_0.
$$
\end{proof}

\begin{lemma}\label{na} The measure $\mu$ is non-atomic.
\end{lemma}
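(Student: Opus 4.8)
The plan is to argue by contradiction. Suppose $\mu$ has an atom, i.e.\ there is a point $x\in[0,1]$ with $\mu(\{x\})>0$. Set $\epsilon=\tfrac12\mu(\{x\})>0$ and apply Lemma \ref{lem4} to this $\epsilon$ and this $x$: it provides an open subinterval $J_x\subset[0,1]$ with $x\in J_x$ and an integer $j_0\ge 1$ such that $\mu_{n_j}(J_x)<\epsilon$ for every $j\ge j_0$.

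Next I would invoke the standard portmanteau characterisation of weak$^*$ convergence. Since $\{\mu_{n_j}\}$ converges weak$^*$ to $\mu$ and $J_x$ is open in $[0,1]$, the mass of open sets is lower semicontinuous, so $\mu(J_x)\le\liminf_{j\to\infty}\mu_{n_j}(J_x)$. Combining this with the bound from Lemma \ref{lem4} gives
$$
\mu(\{x\})\le\mu(J_x)\le\liminf_{j\to\infty}\mu_{n_j}(J_x)\le\epsilon=\tfrac12\mu(\{x\}),
$$
whence $\mu(\{x\})\le 0$, contradicting $\mu(\{x\})>0$. Therefore $\mu$ assigns zero mass to every singleton, that is, $\mu$ is non-atomic.

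I do not expect any serious obstacle at this stage, since Lemma \ref{lem4} has already carried out the substantive work of controlling the empirical measures $\mu_{n_j}$ in a neighbourhood of an arbitrary point. The only points requiring a little care are that the portmanteau inequality is applied in the correct direction (lower semicontinuity of mass on \emph{open} sets), and that the half-open intervals $[0,\delta)$ and $(1-\delta,1]$ produced by Lemma \ref{lemjx}, and hence by Lemma \ref{lem4}, are genuinely open in the subspace $[0,1]$, so that the argument applies uniformly at the endpoints $0$ and $1$ as well as at interior points.
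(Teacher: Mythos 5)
Your proof is correct. You use the portmanteau inequality in the form ``$\mu(U)\le\liminf_j\mu_{n_j}(U)$ for $U$ open,'' applied directly to the interval $J_x$ supplied by Lemma~\ref{lem4}. The paper instead invokes the portmanteau theorem in the form ``$\mu_{n_j}(A)\to\mu(A)$ for every $\mu$-continuity set $A$,'' which forces it to first shrink $J_x$ to a subinterval $J_x'\ni x$ with $\mu(\partial J_x')=0$ (possible because the atom set $S$ is countable) and then bound $\mu(J_x')=\lim_j\mu_{n_j}(J_x')\le\limsup_j\mu_{n_j}(J_x)\le\epsilon$; moreover, the paper restricts this argument to $x\in(0,1)$ and disposes of the endpoints $0$ and $1$ by a separate exhaustion argument showing $\mu\bigl((0,1)\bigr)=1$. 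Your version buys two simplifications: it avoids the auxiliary continuity set $J_x'$ entirely, and, because $[0,\delta)$ and $(1-\delta,1]$ are open in the subspace topology of $[0,1]$ (as you correctly observe), it treats the endpoints on the same footing as interior points, making the second half of the paper's proof unnecessary. The price is nil --- both portmanteau conditions are standard and equivalent. One cosmetic remark: the contradiction framing is not really needed; since $\epsilon>0$ is arbitrary, one can directly conclude $\mu(\{x\})\le\mu(J_x)\le\epsilon$ and hence $\mu(\{x\})=0$, which is what the paper does.
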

\begin{proof} Let $x\in (0,1)$. Given $\epsilon>0$, let $J_x$ be an open subinterval of $[0,1]$ containing $x$ as in the statement of Lemma \ref{lem4}. Since the set $S=\{z\in [0,1]: \mu(\{z\})>0 \}$ is at most countable, there exists a subinterval $J_x'\subset J_x$ containing $x$ such that $\mu\left( \partial J_x'\right)=0$, where
 $\partial J_x'$ denotes the endpoints of the interval $J_x'$.
 By \cite[Theorem 6.1, p. 40]{P} and by $(\ref{xxx})$,
\begin{equation*}
\mu(\{x\})\le\mu(J_x')=\lim_{j\to\infty}\mu_{n_j}(J_x')\le \limsup_{j\to\infty} \mu_{n_j}(J_x)\le \epsilon.
\end{equation*}
The fact that $\epsilon$ is arbitrary yields $\mu(\{x\})=0$.

Now let $A_1\subset A_2\subset\cdots$ be a sequence of subsets of $[0,1]$ such that  $\bigcup_{k\ge 1} A_k=(0,1)$ and $\partial A_k\cap S=\emptyset$ for every $k\ge 1$. By $(\ref{p})$, we have that
$\mu_{n_j}(A_k)=1$ for every $j,k\ge 1$. By \cite[Theorem 6.1, p. 40]{P} once more, we have that
$$ \mu(A_k)=\lim_{j\to\infty} \mu_{n_j}(A_k)=1\quad\textrm{for every}\quad k\ge 1.
$$
In this way,
$$
\mu\left((0,1)\right)=\lim_{k\to\infty} \mu(A_k)=1,\quad\textrm{thus}\quad \mu\left(\{0\}\right)=\mu\left(\{1\}\right)=0.
$$
\end{proof}

The convergence of  $\{\mu_{n_j}\}_{j=1}^\infty$  to $\mu$ in the weak$\textsuperscript{$*$}$ topology implies that
$\lim_{j\to\infty}\int \phi \,{\rm d}\mu_{n_j}=\int \phi \,{\rm d}\mu$ for every
continuos function $\phi:[0,1]\to\R$. The next lemma extends this  claim for the piecewise continuous map $\phi=\varphi \circ f$.

 \begin{lemma}\label{mi} For every continuous function $\varphi:[0,1]\to\R$,
 $$ \lim_{j\to\infty} \int \varphi\circ f\,{\rm d}\mu_{n_j}=\int \varphi\circ f \, {\rm d}\mu.$$
 \end{lemma}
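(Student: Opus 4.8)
The plan is to write $\varphi\circ f$ as a function that differs from a genuinely continuous function only on an arbitrarily small neighbourhood of the discontinuity set $D=\{x_0,\ldots,x_{d+1}\}$, and then to use Lemma \ref{lem4} to control how much mass the empirical measures $\mu_{n_j}$ place on that neighbourhood. Note that $\varphi\circ f$ is continuous on each $(x_{i-1},x_i)$, has lateral limits at every point of $D$, and is bounded, say by $M=\|\varphi\|_\infty$; its only possible discontinuities lie in $D$. This is exactly the place where the classical Kryloff--Bogoliouboff argument breaks down (the obstacle noted by Liverani), and the whole point of the lemma is to handle it.

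Fix $\epsilon>0$. For each $i\in\{0,\ldots,d+1\}$, apply Lemma \ref{lem4} at the point $x_i$ with the constant $\epsilon/(d+2)$ in place of $\epsilon$, obtaining an open interval $J_{x_i}\ni x_i$ and an index $j_i$ with $\mu_{n_j}(J_{x_i})<\epsilon/(d+2)$ for all $j\ge j_i$. After replacing $J_{x_i}$ by a smaller open subinterval still containing $x_i$ — which can only decrease $\mu_{n_j}(J_{x_i})$ — we may also assume $\mu(J_{x_i})<\epsilon/(d+2)$, using that $\mu(\{x_i\})=0$ by Lemma \ref{na} together with continuity from above of the finite measure $\mu$. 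Put $U=\bigcup_{i=0}^{d+1}J_{x_i}$ and $j_0=\max_i j_i$; then $U$ is an open neighbourhood of $D$ with $\mu(U)<\epsilon$ and $\mu_{n_j}(U)<\epsilon$ for every $j\ge j_0$.

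Let $K=[0,1]\setminus U$, a compact set disjoint from $D$; then $f|_K$ is continuous, hence so is $(\varphi\circ f)|_K$, and by the Tietze extension theorem there is a continuous $g:[0,1]\to\R$ with $g=\varphi\circ f$ on $K$ and $\|g\|_\infty\le M$. For $j\ge j_0$ split
\begin{align*}
\Bigl|\int \varphi\circ f\,{\rm d}\mu_{n_j}-\int \varphi\circ f\,{\rm d}\mu\Bigr|
&\le \Bigl|\int (\varphi\circ f-g)\,{\rm d}\mu_{n_j}\Bigr|
+ \Bigl|\int g\,{\rm d}\mu_{n_j}-\int g\,{\rm d}\mu\Bigr| \\
&\quad{}+ \Bigl|\int (g-\varphi\circ f)\,{\rm d}\mu\Bigr|.
\end{align*}
Since $\varphi\circ f-g$ vanishes on $K$ and is bounded by $2M$, the first term is at most $2M\,\mu_{n_j}(U)<2M\epsilon$ and, likewise, the third term is at most $2M\,\mu(U)<2M\epsilon$; the middle term tends to $0$ as $j\to\infty$ because $g$ is continuous and $\mu_{n_j}\to\mu$ in the weak$^*$ topology. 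Hence $\limsup_{j\to\infty}\bigl|\int \varphi\circ f\,{\rm d}\mu_{n_j}-\int \varphi\circ f\,{\rm d}\mu\bigr|\le 4M\epsilon$, and letting $\epsilon\downarrow 0$ gives the assertion.

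The only genuinely delicate step is the second paragraph: producing a neighbourhood of the discontinuity set that is simultaneously $\mu$-small and $\mu_{n_j}$-small for all large $j$. This is precisely what Lemma \ref{lem4} (and, underneath it, the no-connections and no-periodic-points hypotheses via Lemma \ref{lemjx}) buys us; the rest is a routine $\epsilon/3$-type estimate combined with Tietze's theorem.
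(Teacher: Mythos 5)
Your proof is correct and follows essentially the same route as the paper: excise a neighbourhood $U$ of the breakpoints that is simultaneously $\mu$-small (via Lemma \ref{na} plus continuity from above) and $\mu_{n_j}$-small for large $j$ (via Lemma \ref{lem4}), replace $\varphi\circ f$ by a continuous function agreeing with it off $U$, and conclude with a three-term estimate using weak$^*$ convergence. The only differences are cosmetic: you include $x_0=0$ and $x_{d+1}=1$ in the excised set and normalize by $\epsilon/(d+2)$, whereas the paper excises only $x_1,\ldots,x_d$ and accumulates a factor of $d$ in the final bound.
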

 \begin{proof} Let $\epsilon>0$ be arbitrarily small. By Lemma \ref{na}, we have that $\mu(\{x_i\})=0$ for every $1\le i \le d$. Hence, there exists an open interval $J_{x_i}'$ containing $x_i$ such that $\mu(J_{x_i}')<\epsilon$ for every $1\le i\le d$. By Lemma \ref{lem4}, there exist  an open interval
 $J_{x_i}''$ containing $x_i$,  and an integer $j_0\ge 1$ such that
 $$ \mu_{n_j}\left(J_{x_i}''\right) < \epsilon\quad\textrm{for every}\quad j\ge j_0\quad \textrm{and}\quad 1\le i\le d.
 $$ 
 Set $J_{x_i}=J_{x_i}'\cap J_{x_i}''$. The function $\varphi\circ f$ is bounded by some constant $M$ and continuous on each  interval
 $(x_{i-1},x_i)$ for every $1\le i\le d+1$. In this way, there exists a continuous function $h:[0,1]\to [-M,M]$ such that $h(x)=\varphi\circ f(x)$ for every $x\in [0,1]\setminus \bigcup_{i=1}^d J_{x_i}$. Putting it all together yields
 \begin{equation}\label{d1}
 \left| \int \varphi\circ f\,{\rm d}\mu_{n_j}- \int h\,{\rm d}\mu_{n_j} \right|\le \int \left|\varphi\circ f-h\right|\,{\rm d}\mu_{n_j}\le 2 M d \epsilon\quad\textrm{for every}\quad j\ge j_0,
 \end{equation} 
 and
 \begin{equation}\label{d2}
 \left| \int \varphi\circ f\,{\rm d}\mu- \int h\,{\rm d}\mu \right|\le 2M d \epsilon.
 \end{equation}
 Finally, since $h$ is continuous on $[0,1]$ and $\mu_{n_j}$ converges to $\mu$ in the weak\textsuperscript{*} topology, there exists $j_1\ge j_0$ such that
  \begin{equation}\label{d3}
 \left| \int  h\,{\rm d}\mu_{n_j}- \int h\,{\rm d}\mu \right|\le \epsilon\quad \textrm{for every}\quad j\ge j_1.
 \end{equation}
  
 It follows from the equations $(\ref{d1})$, $(\ref{d2})$ and $(\ref{d3})$ that 
$$ \left| \int \varphi\circ f\,{\rm d}\mu_{n_j}- \int \varphi\circ f\,{\rm d}\mu \right|\le (4 M d+1) \epsilon\quad\textrm{for every}\quad j\ge j_1,$$
which concludes the proof.
   \end{proof}
   
    \begin{lemma} [{\cite[Theorem 6.2, p. 147]{W}}]\label{lem66} Let $m_1$ and $m_2$ be two Borel probability measures on $[0,1]$. If $\displaystyle{\int \varphi \,{\rm d}{ m_1}=\int \varphi\, {\rm d}m_2}$ for every continuous function $\varphi:[0,1]\to\R$, then $m_1=m_2$.
  \end{lemma}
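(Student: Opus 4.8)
The plan is to use the hypothesis — agreement of $\int \varphi\,{\rm d}m_1$ and $\int \varphi\,{\rm d}m_2$ for every continuous $\varphi$ — first to force $m_1$ and $m_2$ to coincide on every closed subset of $[0,1]$, and then to propagate this to the full Borel $\sigma$-algebra by a monotone-class (Dynkin $\pi$--$\lambda$) argument. This is, in effect, the uniqueness half of the Riesz representation theorem, but it can be proved directly and self-containedly as follows.

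First I would fix a closed set $F\subseteq [0,1]$ and construct explicit continuous approximations to its indicator function. For $n\ge 1$, set
\[
\varphi_n(x)=\max\bigl\{0,\; 1-n\,d(x,F)\bigr\},\qquad d(x,F)=\inf_{y\in F}\vert x-y\vert .
\]
Each $\varphi_n$ is Lipschitz, hence continuous, takes values in $[0,1]$, equals $1$ on $F$, and is nonincreasing in $n$. Since $F$ is closed, $d(x,F)>0$ whenever $x\notin F$, so $\varphi_n(x)\downarrow \mathbf{1}_F(x)$ pointwise on $[0,1]$. As the $\varphi_n$ are uniformly bounded by $1$ and $m_1,m_2$ are probability measures, dominated convergence gives $\int \varphi_n\,{\rm d}m_i \to m_i(F)$ for $i=1,2$. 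The hypothesis yields $\int \varphi_n\,{\rm d}m_1=\int \varphi_n\,{\rm d}m_2$ for every $n$, and letting $n\to\infty$ we obtain $m_1(F)=m_2(F)$ for every closed $F\subseteq[0,1]$.

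Next I would invoke the Dynkin $\pi$--$\lambda$ theorem. The family $\mathcal{C}$ of closed subsets of $[0,1]$ is a $\pi$-system (closed under finite intersection), contains $[0,1]$, and generates the Borel $\sigma$-algebra $\mathcal{B}$. The family $\mathcal{D}=\{A\in\mathcal{B}:m_1(A)=m_2(A)\}$ is a $\lambda$-system: it contains $[0,1]$; if $A\subseteq B$ with $A,B\in\mathcal{D}$ then $m_i(B\setminus A)=m_i(B)-m_i(A)$, so $B\setminus A\in\mathcal{D}$; and if $A_1\subseteq A_2\subseteq\cdots$ lie in $\mathcal{D}$, then continuity from below gives $m_1(\bigcup_k A_k)=\lim_k m_1(A_k)=\lim_k m_2(A_k)=m_2(\bigcup_k A_k)$. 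Since $\mathcal{C}\subseteq\mathcal{D}$, the theorem forces $\mathcal{B}=\sigma(\mathcal{C})\subseteq\mathcal{D}$, that is, $m_1=m_2$.

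The argument is essentially routine, and the only step demanding any care is the first one — manufacturing continuous functions decreasing to $\mathbf{1}_F$ — which is precisely where the metric structure of $[0,1]$ enters, through the distance-to-$F$ function. An alternative to the $\pi$--$\lambda$ step would be to quote outer regularity of Borel probability measures on a compact metric space and reduce any Borel set to closed (or open) sets directly; but that regularity statement itself relies on the same approximation, so I would prefer the self-contained Dynkin route.
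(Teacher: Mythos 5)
The paper does not prove this lemma; it simply cites \cite[Theorem 6.2, p.~147]{W} and uses the statement as a black box. Your proof is a correct, self-contained argument and is in fact the standard one for this result (and essentially the proof given in Walters): approximate the indicator of a closed set $F$ from above by the continuous functions $\varphi_n(x)=\max\{0,1-n\,d(x,F)\}$, apply dominated convergence to get $m_1(F)=m_2(F)$ for all closed $F$, and then pass from the $\pi$-system of closed sets to the full Borel $\sigma$-algebra by the Dynkin $\pi$--$\lambda$ theorem. All steps check out: the $\varphi_n$ decrease pointwise to $\mathbf{1}_F$ precisely because $F$ is closed, the $\lambda$-system argument uses finiteness of the measures (which holds since they are probabilities), and closed sets do generate the Borel $\sigma$-algebra on $[0,1]$. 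In short, you have supplied a proof where the paper only supplied a reference, and the proof is sound.
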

 
  Given a Borel probability measure $m$ on $[0,1]$ and an integer $k\ge 1$, let $m\circ f^{-k}$ denote the Borel measure defined by $(m\circ f^{-k})(B)=m\left( f^{-k}(B)\right)$ for any Borel set $B$. In particular, for $m=\delta_p$ we have that $\delta_p\circ f^{-k}=\delta_{f^k(p)}$.   
  
  \begin{lemma}[{\cite[Lemma 6.6, p. 150]{W}}]\label{lem55} Let 
  $\psi:[0,1]\to\R$ be a Borel-measurable function, $k\ge 1$ an integer, and $m$ a Borel probability measure on $[0,1]$, then
     $$
  \int \psi\circ  f^k {\rm d}m = \int \psi \,d (m\circ f^{-k})
  $$
  \end{lemma}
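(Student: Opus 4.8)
The plan is to establish the identity by the standard ``layer-cake'' bootstrap, starting from indicator functions and climbing up to arbitrary Borel-measurable $\psi$. Since $f$ is piecewise continuous, it is Borel measurable, and therefore so is every iterate $f^k$; consequently $m\circ f^{-k}$ is a well-defined Borel probability measure on $[0,1]$, and $\psi\circ f^k$ is Borel measurable whenever $\psi$ is, so both sides of the asserted equality are meaningful.

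First I would check the identity for $\psi=\mathbf{1}_B$, where $B\subset[0,1]$ is a Borel set. Here $\psi\circ f^k=\mathbf{1}_{f^{-k}(B)}$, so
$$\int \psi\circ f^k\,{\rm d}m=m\bigl(f^{-k}(B)\bigr)=(m\circ f^{-k})(B)=\int \psi\,{\rm d}(m\circ f^{-k}),$$
which is precisely the claim. By linearity of the integral, the identity extends at once to every nonnegative simple function, that is, to every finite nonnegative linear combination of indicator functions of Borel sets.

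Next I would take an arbitrary nonnegative Borel-measurable $\psi$ and pick nonnegative simple functions $\psi_n\uparrow\psi$ pointwise; then also $\psi_n\circ f^k\uparrow\psi\circ f^k$ pointwise, and two applications of the Monotone Convergence Theorem (once with respect to $m$, once with respect to $m\circ f^{-k}$), combined with the identity already proved for each $\psi_n$, yield the identity for $\psi$. Finally, for a general Borel-measurable $\psi$ I would split $\psi=\psi^{+}-\psi^{-}$ into its positive and negative parts and apply the nonnegative case to each. The argument is routine throughout; the only step deserving a word of justification is the Borel measurability of $f^k$ (and hence the well-definedness of $m\circ f^{-k}$ and of $\psi\circ f^k$), which is immediate from the piecewise continuity of $f$ together with the stability of Borel measurability under composition, so no genuine obstacle is anticipated.
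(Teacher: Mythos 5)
Your proof is correct and is the standard measure-theoretic change-of-variables argument (indicators $\to$ simple functions $\to$ nonnegative Borel via monotone convergence $\to$ general Borel via positive/negative parts). The paper itself gives no proof for this lemma but simply cites Walters~\cite{W}, where the argument is exactly this bootstrap, so your approach coincides with the intended one.
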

   
   \begin{lemma}\label{im} The measure $\mu$ is invariant by $f$.
   \end{lemma}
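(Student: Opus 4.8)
The plan is to run the classical telescoping argument for empirical (Birkhoff) averages, with Lemma \ref{mi} taking the place of the continuity of $\varphi\circ f$ that would be available if $f$ were continuous. By Lemma \ref{lem66} it suffices to prove that $\int \varphi\,{\rm d}\mu=\int \varphi\,{\rm d}(\mu\circ f^{-1})$ for every continuous $\varphi:[0,1]\to\R$. Applying Lemma \ref{lem55} with $k=1$ and $m=\mu$, the right-hand side equals $\int \varphi\circ f\,{\rm d}\mu$, so the goal becomes $\int \varphi\,{\rm d}\mu=\int \varphi\circ f\,{\rm d}\mu$.

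To compute $\int \varphi\circ f\,{\rm d}\mu$, I would invoke Lemma \ref{mi} to write it as $\lim_{j\to\infty}\int \varphi\circ f\,{\rm d}\mu_{n_j}$. For each fixed $j$, Lemma \ref{lem55} (again with $k=1$) gives $\int \varphi\circ f\,{\rm d}\mu_{n_j}=\int \varphi\,{\rm d}(\mu_{n_j}\circ f^{-1})$, and since $\delta_{f^k(p)}\circ f^{-1}=\delta_{f^{k+1}(p)}$ one has the telescoping identity
$$\mu_{n_j}\circ f^{-1}=\dfrac{1}{n_j}\sum_{k=1}^{n_j}\delta_{f^k(p)}=\mu_{n_j}+\dfrac{1}{n_j}\left(\delta_{f^{n_j}(p)}-\delta_p\right).$$
Integrating $\varphi$ against both sides yields $\int \varphi\circ f\,{\rm d}\mu_{n_j}-\int \varphi\,{\rm d}\mu_{n_j}=\frac{1}{n_j}\big(\varphi(f^{n_j}(p))-\varphi(p)\big)$, which tends to $0$ as $j\to\infty$ because $\varphi$ is bounded and $n_j\to\infty$.

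It remains to combine this with the weak$^*$ convergence $\mu_{n_j}\to\mu$, which (as $\varphi$ is continuous) gives $\int \varphi\,{\rm d}\mu_{n_j}\to\int \varphi\,{\rm d}\mu$. Putting these together,
$$\int \varphi\circ f\,{\rm d}\mu=\lim_{j\to\infty}\int \varphi\circ f\,{\rm d}\mu_{n_j}=\lim_{j\to\infty}\int \varphi\,{\rm d}\mu_{n_j}=\int \varphi\,{\rm d}\mu,$$
and Lemma \ref{lem66} applied to $m_1=\mu$, $m_2=\mu\circ f^{-1}$ gives $\mu=\mu\circ f^{-1}$, i.e. $\mu$ is $f$-invariant. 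The single non-routine step is the very first equality in the last display: because $f$ is only piecewise continuous, $\varphi\circ f$ is in general discontinuous, so weak$^*$ convergence cannot be applied to it directly — supplying this equality is precisely the role of Lemma \ref{mi}, and hence the crux of the whole argument (cf. \cite[p. 4]{CL}).
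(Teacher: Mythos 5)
Your proposal is correct and follows essentially the same route as the paper: reduce via Lemma~\ref{lem66} and Lemma~\ref{lem55} to showing $\int\varphi\,{\rm d}\mu=\int\varphi\circ f\,{\rm d}\mu$, invoke Lemma~\ref{mi} to pass the limit through $\varphi\circ f$ (the crux), and use the telescoping identity for the empirical measures together with weak$^*$ convergence. The only cosmetic difference is that you phrase the telescoping cancellation at the level of measures, $\mu_{n_j}\circ f^{-1}=\mu_{n_j}+\tfrac{1}{n_j}(\delta_{f^{n_j}(p)}-\delta_p)$, whereas the paper writes it out as a cancelling sum of integrals against $\delta_p$.
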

   \begin{proof} By Lemma $\ref{lem66}$ and Lemma $\ref{lem55}$ (taking $\psi=\varphi$, $k=1$ and $m=\mu$), it suffices to show that
   \begin{equation}\label{sts}
   \displaystyle{\int \varphi\circ f \, {\rm d}\mu=\int \varphi\,{\rm d}\mu}
   \end{equation}
    for every continuous function $\varphi: [0,1]\to\R$. By Lemma $\ref{mi}$, for every continuos function $\varphi:[0,1]\to\R$,
   \begin{equation}\label{222}
   \left| \int \varphi\circ f \,{\rm d}{\mu}-\int \varphi \,{\rm d}\mu\right|=\lim_{j\to\infty}\left| \int \varphi\circ f\,{\rm d}{\mu_{n_j}} - \int \varphi\, d {\mu_{n_j}} \right|
   \end{equation}
   By Lemma $\ref{lem55}$ once more (now taking $\psi=\varphi\circ f$ and $m=\delta_p$), we reach
   \begin{equation}\label{mm1}
   \int \varphi\circ f\,{\rm d}\mu_{n_j}=\dfrac{1}{n_j}\sum_{k=0}^{n_j-1}\int \varphi\circ f\, {\rm d}(\delta_p\circ f^{-k})=\dfrac{1}{n_j}\sum_{k=0}^{n_j-1}\int \varphi\circ f^{k+1}\, {\rm d} \delta_p
   \end{equation}
   Likewise, 
   \begin{equation}\label{mm2}
   \int \varphi \,{\rm d}\mu_{n_j}=\dfrac{1}{n_j}\sum_{k=0}^{n_j-1}\int \varphi\, {\rm d}(\delta_p\circ f^{-k})=\dfrac{1}{n_j}\sum_{k=0}^{n_j-1}\int \varphi\circ f^{k}\, {\rm d} \delta_p
   \end{equation}
   It follows from $(\ref{222})$, $(\ref{mm1})$ and $(\ref{mm2})$ that
   \begin{eqnarray*}\label{333}
   \left| \int \varphi\circ f \,{\rm d}{\mu}-\int \varphi \,{\rm d}\mu\right|&=&\lim_{j\to\infty}\left| \dfrac{1}{n_j}\int\sum_{k=0}^{n_j-1} \left(\varphi\circ f^{k+1} -\varphi\circ f^k\right)\,{\rm d} \delta_p 
    \right| \\ &=& \lim_{j\to\infty}\left| \dfrac{1}{n_j}\int \left(\varphi\circ f^{n_j} -\varphi \right)\,{\rm d} \delta_p \right|
    \\ & \le & \lim_{j\to\infty} \dfrac{2 \Vert f\Vert}{n_j}=0.
   \end{eqnarray*}
   Hence, $(\ref{sts})$ holds, which concludes the proof.
   \end{proof}
   
   \begin{remark} The proof of Theorem \ref{main1} follows  from Lemmas \ref{na} and \ref{im}.
   \end{remark}
   
   \section{Proof of the other results}
   
 \noindent {\bf Corollary \ref{cor1}}.\,{\it Let $f:[0,1]\to [0,1]$ be an injective piecewise continuous  map with no connections and no periodic orbits, then $f$ is topologically semi-conjugate to an interval exchange transformation, possibly with flips.}
    
   \begin{proof} By Theorem \ref{main1}, $f$ admits a non-atomic Borel probability measure $\mu$ invariant by $f$. Let $h:[0,1]\to [0,1]$ be defined by $h(x)=\mu\left([0,x]\right)$, then $h$ is a continuous nondecreasing surjective map. Let $1\le i\le d+1$ and $x,y\in (x_{i-1},x_i)$ be such that $h(x)=h(y)$. We claim that $h\left(f(x)\right)=h\left(f(y)\right)$.  Assume that $x\le y$ and $f(x)\le f(y)$, then,  the injectivity of $f$ together with the continuity of $f\vert_{(x_{i-1},x_i)}$ yield $[x,y]=f^{-1}\left([f(x),f(y)]\right)$. Hence, since $\mu$ is non-atomic,
  \begin{equation}\label{conj}
   \left\vert h\left(f(y)\right)-h\left(f(x)\right) \right\vert=\mu\left( [f(x),f(y)] \right)=\mu\left(f^{-1} \left([f(x),f(y)]\right) \right)=\mu\left([x,y]\right)= \vert h(y)-h(x)\vert .
 \end{equation}
  As for the other cases, proceed likewise to show that $(\ref{conj})$ still holds. Hence, the claim is true. 

Let $T:[0,1]\to [0,1]$ be defined by $T(h(x))=h \left(f(x)\right)$.  By the claim,  $T$ is well-defined.  Let $t_0,t_1,\ldots,t_{d+1}$ be defined by $t_0=0$, $t_{d+1}=1$ and $t_i=h(x_i)$ for every $1\le i\le d$. By $(\ref{conj})$, we have that for every $t,s\in (t_{i-1},t_i)$, there exists $x,y\in (x_{i-1},x_i)$ such that $t=h(x)$, $s=h(y)$ and
$$
 \vert T(t)-T(s)\vert=\vert h\left( f(x)\right)-h\left( f(y) \right) \vert=\vert h(x)-h(y)\vert= \vert t - s\vert\quad\textrm{for every}\quad t,s\in (t_{i-1},t_i).
$$
This proves that $T\vert_{(t_{i-1},t_i)}$ is an isometry, therefore $T$ is an interval exchange transformation, possibly with flips. By definition, $T\circ h=h\circ f$, thus $f$ is topologically semi-conjugate to $T$.    
\end{proof}

\noindent{\bf Theorem \ref{main2}}. {\it Let $\phi_1,\ldots,\phi_{d+1}:[0,1]\to (0,1)$ be continuous maps and let $\Omega\subset\R^d$ be the open set $\Omega=\{(x_1,\ldots,x_{d})\in \R^d \mid 0 < x_1<\cdots < x_{d}<1\}$, then for Lebesgue almost every $(x_1,\ldots,x_{d})\in \Omega$, the piecewise continuous map $f:[0,1]\to (0,1)$ defined by $f(x)=\phi_i(x)$ if $x\in I_i$, where $I_1=[0,x_1), I_2=[x_{1},x_2),\ldots, I_d=[x_{d-1},x_d)$, $I_{d+1}=[x_{d},1]$, has no connections and hence admits an invariant Borel probability measure.
  }
   \begin{proof} Denote by $Id$ the identity map on $[0,1]$. Set $\mathscr{C}_0=\{Id\}$. Let
\begin{equation*}\label{ckak}
\mathscr{C}_{k}=\{\phi_{i}\circ h \mid 1\le i\le d+1, h \in \mathscr{C}_{k-1} \},\quad k\ge 1. 
\end{equation*} 
For each $0\le i\le d+1$, $1\le j\le d$, $w_i\in \{w_i^-,w_i^+\}$ and $h\in \bigcup_{k\ge 0} \mathscr{C}_{k}$, the set $\{(x_1,\ldots,x_{d})\in\Omega\mid x_j=h(w_i)\}$ is the graph of a continuous function defined on $[0,1]$, thus it is a Lebesgue null set. This together with the fact that $x_0=0$ and $x_{d+1}=1$ do not belong to the range of any $h\in\bigcup_{k\ge 1}\mathscr{C}_k$ imply that the set of parameters $(x_1,\ldots,x_d)\in\Omega$ for which the map $f$ has connections is a Lebesgue null set, denoted by $N$. Let $(x_1,\ldots,x_d)\in \Omega\setminus N$, then either $f$ has a periodic point or $f$ has no periodic points and no connections. In the first case, $f$ has an an invariant Borel probability measure supported on its periodic orbits while in the second case, by Theorem \ref{main1}, $f$ admits an invariant non-atomic  Borel probability measure.
  \end{proof}
 
\section{Final remarks}

The existence of connections neither imply nor is implied by the existence of periodic points. In fact, let $f_1,f_2:[0,1]\to [0,1]$ be the piecewise affine maps defined by
$$ f_1(x)=\left\{
\begin{array}{lll}
\dfrac{x}{2} +\dfrac18 &\text{if} & 0\le x< \dfrac12\\[1em]
\dfrac{x}{2} +\dfrac38 &\text{if} & \dfrac12\le x\le 1\\
\end{array}\right.,\quad
f_2(x)=\left\{
\begin{array}{lll}
\dfrac{x}{2} +\dfrac14 &\text{if} & 0\le x< \dfrac12\\[1em]
\dfrac{x}{2} &\text{if} & \dfrac12\le x\le 1\\
\end{array}\right..
$$
The map $f_1$ has two periodic points and no connections. The map $f_2$ has a connection but no periodic points. Moreover, it does not admit any invariant Borel probability measure. In this way, the hypothesis of no connections in Theorem \ref{main1} cannot be completely removed.

\end{document}